\numberwithin{equation}{section}
\theoremstyle{plain}
\newtheorem{thm}{Theorem}[section]
\newtheorem{theorem}{Theorem}[section]
\newtheorem{lemma}[thm]{Lemma}
\newtheorem{definition}[theorem]{Definition}
\newtheorem{proposition}[theorem]{Proposition}
\newtheorem{corollary}[theorem]{Corollary}
\newtheorem{remark}[theorem]{Remark}
\newcommand{\sgn}{\mathrm{sgn}}
\newcommand{\defeq}{\vcentcolon=}
\newcommand{\cc}{\mathbb{C}}
\newcommand{\qq}{\mathbb{Q}}
\theoremstyle{definition}
\title[An explicit inversion formula for the $p$-adic Whittaker transform on $\text{GL}_n(\mathbb{Q}_p)$]{An explicit inversion formula for the $p$-adic Whittaker transform on $\text{GL}_n(\mathbb{Q}_p)$}
\author{Jo\~{a}o Guerreiro}
\address{Max-Planck-Institut f\"ur Mathematik, Vivatsgasse 7, 53111, Bonn, Germany}
\email{guerreiro@mpim-bonn.mpg.de}
\keywords{Whittaker function, Whittaker transform, inverse transform, Plancherel formula, L-function, integral representation}
\subjclass[2010]{Primary 11F85; Secondary 22E35}
\date{\today}
\begin{document}

\begin{abstract}
Whittaker functions on non-archimedean fields were first introduced in the work of Jacquet, and they were characterized explicitly by Shintani. We obtain an explicit inversion formula and a Plancherel formula for the $p$-adic Whittaker transform on $\text{GL}_n(\mathbb{Q}_p)$. As an application, integral representations are obtained for the local factors of certain symmetric power $L$-functions.
\end{abstract}

\maketitle

%%\begin{keyword}
%%Maass forms \sep Kuznetsov trace formula \sep Weyl's law

%%\MSC[2010] 11F12, 11F72
%%\end{keyword}

\section{Introduction}

Whittaker functions on local non-archimedean fields (such as $\mathbb{Q}_p$) have been studied by several authors since they were introduced in 1967 by Jacquet \cite{jacquet}. Shintani \cite{shintani} obtained an explicit formula for these Whittaker functions, which was then generalized in different directions in other works (see \cite{casselmanshalika}, \cite{miyauchi}). 

Let $G= \text{GL}_n(\mathbb{Q}_p)$ for which we have the Iwasawa decomposition $G = U T K$ where $U = U(\mathbb{Q}_p)$ is the unipotent radical of the standard Borel subgroup, $T = T(\mathbb{Q}_p)$ is the torus of diagonal matrices and $K = \text{GL}_n(\mathbb{Z}_p)$ is the maximal compact subgroup.
Let $Z$ be the center of $G$. Let $\psi$ be a character on $U$ induced from a character $\psi'$ on $\mathbb{Q}_p$ via
\begin{equation*}
\psi(u) = \psi'\left(\sum_{i=1}^{n-1} u_{i,i+1}   \right), \qquad \left( \text{where }u = (u_{i,j}) \in U\right).
\end{equation*}

\begin{definition}[Spherical Hecke Algebra]
Define the spherical Hecke algebra $^K \mathcal{H}^K$ to be the set of locally constant, compactly supported functions $f : \text{GL}_n(\mathbb{Q}_p) \rightarrow \cc$ satisfying
\begin{equation*}
f( k_1 g k_2) = f(g)
\end{equation*}
for $k_1, k_2 \in K$, $g \in \text{GL}_n(\mathbb{Q}_p)$.
\end{definition}

On $\text{GL}_n(\mathbb{Q}_p)$, Whittaker functions transform my the action of a character under the unipotent radical and satisfy an integral property relative to elements of the spherical Hecke algebra, as below.

\begin{definition}[Whittaker Function on $\text{GL}_n(\mathbb{Q}_p)$]
A Whittaker function on $\text{GL}_n(\mathbb{Q}_p)$ is a $K$-finite smooth function of moderate growth $W : \text{GL}_n(\mathbb{Q}_p) \rightarrow \mathbb{C}$ that satisfies
\begin{equation*}
W (u g) = \psi(u) W(g)
\end{equation*}
for some fixed character $\psi$ as defined above and any $u \in U$, and
\begin{equation*}
\int \limits_{G} W(g x) \phi(x) \mathop{d x} = \lambda(\phi) W(g)
\end{equation*}
for each algebra homomorphism $\lambda : \, ^K\!\mathcal{H}^K \rightarrow \cc$, $\phi \in \, ^K\!\mathcal{H}^K$ and $g \in \text{GL}_n(\mathbb{Q}_p)$.
\end{definition}

Let $h:Z(\mathbb{Q}_p)\backslash T(\mathbb{Q}_p) \to \mathbb{C}$ be a smooth function. The $p$-adic Whittaker transform of $h$ is defined to be
\begin{equation*}
\int\limits_{Z(\mathbb{Q}_p)\backslash T(\mathbb{Q}_p)} h(t) W(t) d^\times t,
\end{equation*}
where $W$ is a Whittaker function on $\text{GL}_n(\mathbb{Q}_p)$.

The main goal of the present paper is to obtain an inversion formula for the $p$-adic Whittaker transform. The precise inversion formula for the Whittaker transform is given in Theorem \ref{transformthm}.
On archimedean local fields Whittaker functions have been thoroughly studied and an inversion formula for the Whittaker transform in known due to the works of Wallach \cite{wallach} and Goldfeld-Kontorovich \cite{transform}. The approach presented in this paper is inspired by the latter. A crucial step in our approach is the computation of an integral of a product of two $p$-adic Whittaker functions.
One can view this computation as a non-archimedean analogue of Stade's formula \cite{stade}. As a corollary of the inversion formula we also obtain a Plancherel formula for the $p$-adic Whittaker transform. 

In the last section, we use the inversion formula to write integral representations of local $L$-factors associated to a symmetric square and symmetric cube lifts.

We remark that the inversion formula had already been obtained in the setting of $p$-adic reductive groups by Delorme \cite{delorme}. However, the approach presented here provides a more explicit presentation of the result in this particular setting and its simple derivation solely relies on complex analysis.

\section{Inversion Formula}

In this section we provide all the necessary definitions and results that allow us to state the main theorem \ref{transformthm}, which gives the inverse formula for the $p$-adic Whittaker transform.

We start by noting Whittaker functions on $\text{GL}_n(\mathbb{Q}_p)$ arise naturally from certain irreducible representation of this group.

\begin{definition}[Whittaker Function associated to a Representation $\pi$]
\label{def}
Let $V$ be a complex vector space and let $(\pi,V)$ be an irreducible generic representation of $G$. Then the space 
\begin{equation*}
\text{Hom}_{U}(\pi,\psi) = \left\{ f : V \rightarrow \mathbb{C} \text{ linear}: f(\pi(u) v) = \psi(u) f(v), \forall v \in V, u \in U \right\}
\end{equation*}
is one-dimensionanal generated by an element $\lambda$. For $v \in V$ a newform, define a Whittaker function $W$ associated to $\pi$ as
\begin{equation*}
W(g) = \lambda(\pi(g)v).
\end{equation*}
\end{definition}

\begin{remark}
As the space of newforms in $V$ is one-dimensional then the Whittaker function associated to a representation $\pi$ is well-defined up to a constant. Furthermore, a Whittaker function associated to an irreducible generic representation $\pi$ of $\text{GL}_n(\mathbb{Q}_p)$ is a nonzero Whittaker function on $\text{GL}_n(\mathbb{Q}_p)$. For proofs of these statements see \cite{jacquetetal} and \cite{kazhdan}.
\end{remark}

In order to explicitly describe Whittaker functions associated to certain representations of $\text{GL}_n(\mathbb{Q}_p)$ we proceed to quote a theorem of Shintani \cite{shintani}. For $g = z u t k \in \text{GL}_n(\mathbb{Q}_p)$ define the Iwasawa coordinates

\begin{equation*}
 g = z u \left( \begin{array}{cccc}
t_1 \cdots t_{n-1} &  &  & \\
 & \ddots & &  \\
 & & t_1 & \\
 &  & & 1 \end{array} \right) k
\end{equation*}
where $z \in Z$, $u \in U$, $t \in T$ and $k \in K$.

\begin{theorem}[Shintani]
\label{shintani}
Let $\pi$ be an irreducible unramified generic representation of $\text{GL}_n(\mathbb{Q}_p)$ and $W_{\pi}$ the Whittaker function associated to a representation $\pi$ (normalized such that $W_{\pi}(\mathrm{id}) = 1$ where $\mathrm{id}$ denotes the $n \times n$ identity matrix) as in Definition \ref{def}. We can write the L-function associated to $\pi$ as
\begin{equation*}
L(s,\pi) = \prod_{i=1}^n \left(1-\alpha_i p^{-s} \right)^{-1}
\end{equation*}
where $\alpha_i \in \mathbb{C}$ are all nonzero and $\alpha_1 \cdots \alpha_n = 1$. 
Then, for
\begin{equation*}
t = \left( \begin{array}{cccc}
t_1 \cdots t_{n-1} &  &  & \\
 & \ddots & &  \\
 & & t_1 & \\
 &  & & 1 \end{array} \right)
 \end{equation*}
we have
\begin{equation*}
W_{\pi}(t) = \begin{cases} \delta^{1/2}(t) s_{\text{\rm-}\!\log|t|_p}(\alpha), & \mbox{if } t_i \in \mathbb{Z}_p \, \text{ for } i=1,\cdots,n-1, \\ 0, & \mbox{otherwise;}  \end{cases}
\end{equation*}
where 
\begin{equation*}
\delta(t) = \prod_{k=1}^{n-1} |t_k|_p^{k(n-k)},
\end{equation*}

\begin{equation*}
\log|t|_p = \left(\log|t_1|_p, \cdots, \log|t_{n-1}|_p \right) \quad \text{for} \quad \log = \log_p,
\end{equation*}
and
\begin{equation*}
s_{\lambda}(\alpha) = \frac{\left| \begin{array}{cccc}
\alpha_1^{n-1+\lambda_1+\lambda_2+\cdots+\lambda_{n-1}}  & \alpha_2^{n-1+\lambda_1+\lambda_2+\cdots+\lambda_{n-1}} & \cdots & \alpha_n^{n-1+\lambda_1+\lambda_2+\cdots+\lambda_{n-1}} \\
\alpha_1^{n-2+\lambda_1+\lambda_2+\cdots+\lambda_{n-2}}  & \alpha_2^{n-2+\lambda_1+\lambda_2+\cdots+\lambda_{n-2}} & \cdots &  \alpha_n^{n-2+\lambda_1+\lambda_2+\cdots+\lambda_{n-2}} \\
\vdots & \vdots & \ddots &  \vdots \\
\alpha_1^{1+\lambda_1} & \alpha_2^{1 +\lambda_1} & \cdots & \alpha_n^{1 +\lambda_1} \\
1 & 1 & \cdots & 1 \end{array} \right|}{\left| \begin{array}{cccc}
\alpha_1^{n-1}  & \alpha_2^{n-1} & \cdots & \alpha_n^{n-1} \\
\alpha_1^{n-2}  & \alpha_2^{n-2} & \cdots & \alpha_n^{n-2} \\
\vdots & \vdots & \ddots & \vdots \\
\alpha_1 & \alpha_2 & \cdots & \alpha_n \\
1 & 1 & \cdots & 1 \end{array} \right|}
\end{equation*}
is the Schur polynomial, where $\lambda = (\lambda_1,\cdots,\lambda_n)$ and $\lambda_i \in \mathbb{Z}$.
\end{theorem}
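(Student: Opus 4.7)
The strategy is to combine three constraints that $W_\pi$ must satisfy and then check that the claimed formula is the unique solution, given the normalization $W_\pi(\mathrm{id})=1$. The three constraints are (i) the left $(U,\psi)$-equivariance, (ii) right $K$-invariance (which holds because $\pi$ is unramified and the chosen newform is $K$-fixed), and (iii) the Hecke eigenvalue identity in the definition of a Whittaker function. The Iwasawa decomposition $G=UTK$ together with (i) and (ii) immediately reduces the problem to describing $W_\pi$ on $T$, so I only need to prove the two cases in the statement.

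For the vanishing case, fix $j\in\{1,\dots,n-1\}$ and, for $x\in\mathbb{Z}_p$, set $u=I+xE_{n-j,n-j+1}$, which lies in $U\cap K$. Right $K$-invariance gives $W_\pi(tu)=W_\pi(t)$, while pushing $u$ to the left as $tut^{-1}=I+xt_j\,E_{n-j,n-j+1}\in U$ and applying (i) gives $W_\pi(tu)=\psi'(xt_j)\,W_\pi(t)$. Comparing these two identities forces $\psi'(xt_j)=1$ for every $x\in\mathbb{Z}_p$ whenever $W_\pi(t)\ne 0$; since $\psi'$ has conductor $\mathbb{Z}_p$, this yields $t_j\in\mathbb{Z}_p$, as required.

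For the nonzero case, I would first reduce modulo $Z$ to assume $t=\mathrm{diag}(p^{\mu_1},\dots,p^{\mu_n})$ with $\mu_1\ge\dots\ge\mu_n=0$, corresponding to the dominant cocharacter $\lambda=(\lambda_1,\dots,\lambda_{n-1})$ with $\lambda_i\ge 0$. I would then apply (iii) to the Hecke operators $\phi_r$ ($1\le r\le n$) supported on $K\,\mathrm{diag}(p,\dots,p,1,\dots,1)K$ (with $r$ copies of $p$). By the Satake isomorphism the eigenvalue $\lambda(\phi_r)$ is $p^{-r(n-r)/2}e_r(\alpha_1,\dots,\alpha_n)$, and, after unfolding the integral via Iwasawa coordinates of a set of representatives for the support of $\phi_r$ and invoking the vanishing already proved, one obtains a recursion expressing $W_\pi$ at $\lambda$ in terms of its values at cocharacters $\lambda'$ obtained from $\lambda$ by adding a sum of distinct fundamental coweights. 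Dividing through by $\delta^{1/2}(t)$, this recursion becomes exactly Pieri's rule for the product $e_r(\alpha)\cdot s_\lambda(\alpha)$. Starting from $W_\pi(\mathrm{id})=1=s_0(\alpha)$, induction on $|\lambda|$ identifies the normalized function with the Schur polynomial $s_\lambda(\alpha)$.

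The main obstacle is the unfolding step: one has to produce a clean list of Iwasawa representatives for the double cosets comprising the support of $\phi_r$, compute the diagonal parts modulo $Z$, and verify that the resulting combinatorial identity coincides with Pieri's rule. This is essentially the combinatorial content of Shintani's formula, and once it is carried out the normalizing factor $\delta^{1/2}(t)$ emerges precisely from the volumes of the $K$-orbits in the support of $\phi_r$.
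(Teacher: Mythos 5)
The paper does not prove this theorem at all: its ``proof'' consists of the citations to Shintani's original article and to Cogdell's notes for the link between $L(s,\pi)$ and $W_\pi$. So any genuine argument is, by default, a different route from the paper's. Your overall plan is the standard one (and is essentially Shintani's own, also the one in Bump's and Casselman--Shalika's treatments): reduce to the torus via Iwasawa and $(U,\psi)$-equivariance plus $K$-invariance, prove the support condition by conjugating unipotent elements of $U\cap K$ across $t$, and pin down the values on dominant cocharacters by the Hecke eigenvalue identity for the minuscule operators $\phi_r$, matching the resulting recursion with Pieri's rule $e_r\cdot s_\lambda=\sum_\mu s_\mu$. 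The support argument you give is complete and correct (granting, as one must, that $\psi'$ is taken unramified with conductor exactly $\mathbb{Z}_p$; the paper never says this explicitly but the theorem is false without it).

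The gap is that the entire analytic and combinatorial content of the theorem is concentrated in the step you explicitly defer: the unfolding of $\int W(tg)\phi_r(g)\,dg$ over coset representatives of $K\varpi_rK/K$. Three things there are not routine and are not supplied. First, writing $tg_i=u't'k$ one gets $W(tg_i)=\psi(u')W(t')$, and the sums of $\psi(u')$ over the representatives with a fixed diagonal part either vanish or not; this vanishing is precisely what cuts the sum down to the vertical strips of Pieri's rule, and without computing it you have not shown the recursion is Pieri's rule rather than some other positive combination. Second, the identity $\lambda(\phi_r)W(t)=\sum_iW(tg_i)$ expresses an eigenvalue times $W(\lambda)$ as a sum of values at cocharacters $\lambda+\epsilon$ \emph{above} $\lambda$, so the induction on $|\lambda|$ only closes after you isolate the unique maximal coset (the one with diagonal part $\mathrm{diag}(p,\dots,p,1,\dots,1)$ in dominant position), check it contributes with coefficient one, and solve for $W$ there in terms of lower terms; as written your recursion runs in the wrong direction. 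Third, the normalizations ($\delta^{1/2}$ from coset counts, and the Satake eigenvalue, which with the paper's conventions should be $p^{+r(n-r)/2}e_r(\alpha)$ rather than $p^{-r(n-r)/2}e_r(\alpha)$) have to come out exactly right for the final formula to match. None of these would \emph{fail} --- the plan is sound --- but as it stands the proposal is an accurate outline of Shintani's proof rather than a proof.
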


\begin{proof}
See \cite{shintani}. For the relation between the $L$-function $L(s,\pi)$ and the Whittaker function $W_{\pi}$ see Subsection 3.1.3~\cite{cogdell}.
\end{proof}

\begin{remark}Throughout this paper we will always use $\log$ to denote the base $p$ logarithm.
We will also use the notation $W_{\alpha}$, for $\alpha \in (\mathbb{C} \backslash \{0 \})^n$, to denote the function
\begin{equation*}
W_{\alpha}(t) = \begin{cases} \delta^{1/2}(t) s_{\text{\rm-}\!\log|t|_p}(\alpha), & \mbox{if } t_i \in \mathbb{Z}_p \, \text{ for } i=1,\cdots,n-1, \\ 0, & \mbox{otherwise.}  \end{cases}.
\end{equation*}finite
This coincides with the definition of $W_{\pi}$ when $\alpha = (\alpha_1,\cdots, \alpha_n)$ are the Langlands parameters associated to $\pi$.
\end{remark}
We are now able to define the $p$-adic Whittaker transform precisely.

\begin{definition}[$p$-adic Whittaker transform]
\label{whittakertransform}
Let $h:Z(\mathbb{Q}_p) \backslash T(\mathbb{Q}_p) \rightarrow \mathbb{C}$. Let $\mathcal{S}_{\eta}$ be the annulus $\{ z \in \cc : p^{-\eta} < |z|_{\cc} < p^{\eta} \}$, for some $\eta >0$. Define the Whittaker transform $h^{\sharp} : \mathcal{S}_{\eta}^{n} \rightarrow \mathbb{C}$ by
\begin{equation*}
h^{\sharp}(\alpha) = \int \limits_{T(\mathbb{Z}_p) \backslash T(\mathbb{Q}_p)} h(t) W_{\alpha}(t) d^{\times}t,
\end{equation*}
provided the integral converges absolutely,
where
\begin{equation*}
d^{\times}t = \prod_{k=1}^{n-1} |t_k|_p^{-k(n-k)} \frac{dt_k}{|t_k|_p}.
\end{equation*}
\end{definition}

\begin{remark}
\label{convergence}
A sufficient condition for the absolute convergence of the above integral is that $h$ satisfies the decay condition
\begin{equation*}
\lvert h(t) \rvert_{\cc} \ll \delta^{1/2}(t) \lvert t_1^{n-1} t_2^{n-2} \cdots t_{n-1}\rvert_p^{\eta + \varepsilon}
\end{equation*}
for any $\varepsilon > 0$.
\end{remark}

\begin{definition}[Inverse Whittaker Transform]
\label{invtransform}
Let $H : \mathcal{S}_{\eta}^n \rightarrow \mathbb{C}$ be a holomorphic symmetric function, for some $\eta >0$. Let
\begin{equation*}
W_{\alpha}(t) = \begin{cases} \delta^{1/2}(t) s_{\text{\rm-}\!\log|t|_p}(\alpha), & \mbox{if } t_i \in \mathbb{Z}_p \, \text{ for } i=1,\cdots,n-1, \\ 0, & \mbox{otherwise;}  \end{cases}
\end{equation*}
where $\alpha = (\alpha_1,\ldots,\alpha_n)$. Define the inverse Whittaker transform of $H$, $H^{\flat}: Z(\mathbb{Q}_p) \backslash T(\mathbb{Q}_p) \rightarrow \mathbb{C}$ by
\begin{equation*}
H^{\flat}(t) = \frac{1}{n! (2\pi i)^{n-1}} \int \limits_{\mathbb{T}} H(\beta) W_{1/\beta}(t) \prod \limits_{\substack{i, j = 1 \\ i \neq j}}^n \left(\beta_i - \beta_j \right) \frac{d \beta_1 \cdots d \beta_{n-1}}{\beta_1 \cdots \beta_{n-1}}
\end{equation*}
where  $\mathbb{T} = \big\{ (\beta_1,\cdots,\beta_{n-1}) \in \mathbb{C}^{n-1} : |\beta_i|_{\mathbb{C}} = 1 \big\}$,
$1/\beta = \left(1/\beta_1,\cdots, 1/\beta_n\right)$ and $\beta_n = \frac{1}{\beta_1 \cdots \beta_{n-1}}$.
\end{definition}

\begin{remark}
Note that $H^{\flat}$ is supported on $(\mathbb{Z}_p \backslash \{0\})^{n-1}$, if we identify $Z(\mathbb{Q}_p) \backslash T(\mathbb{Q}_p)$ with $(\mathbb{Q}_p^{\times})^{n-1}$ as in Theorem~\ref{shintani}. Moreover, $H^{\flat}$ is invariant under the action of $(\mathbb{Z}_p^{\times})^{n-1}$.
\end{remark}

The inverse transform is given in the following theorem.

\begin{theorem}[Inversion Formula]
\label{transformthm}
Let $H : \mathcal{S}_{\eta}^n \rightarrow \mathbb{C}$ be a holomorphic symmetric function, for some $\eta >0$. Assume $(H^{\flat})^{\sharp}$ converges absolutely on $\mathcal{S}_{\eta}^n$. Then,
\begin{equation*}
(H^{\flat})^{\sharp}(\alpha) = H(\alpha)
\end{equation*}
for $\alpha \in \mathcal{S}_{\eta}^n$ with $\alpha_1 \cdots \alpha_n = 1$.
\end{theorem}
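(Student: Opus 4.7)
The plan is to substitute the inverse transform, swap the order of integration, evaluate the resulting Whittaker inner product in closed form (the non-archimedean Stade analogue advertised in the introduction), and extract $H(\alpha)$ from the remaining $(n-1)$-dimensional contour integral on $\mathbb{T}$ by multidimensional residue calculus.

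In the first step I would substitute the definition of $H^{\flat}$ into $(H^{\flat})^{\sharp}(\alpha)$ and, using the absolute convergence hypothesis, invoke Fubini to write
\begin{equation*}
(H^{\flat})^{\sharp}(\alpha) = \frac{1}{n!(2\pi i)^{n-1}} \int_{\mathbb{T}} H(\beta)\, J(\alpha,\beta) \prod_{\substack{i,j=1\\ i\neq j}}^n (\beta_i-\beta_j)\, \frac{d\beta_1 \cdots d\beta_{n-1}}{\beta_1 \cdots \beta_{n-1}},
\end{equation*}
where $J(\alpha,\beta) = \int_{T(\mathbb{Z}_p)\backslash T(\mathbb{Q}_p)} W_{\alpha}(t)\, W_{1/\beta}(t)\, d^{\times}t$.

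In the second step I would evaluate $J(\alpha,\beta)$ explicitly. By Theorem~\ref{shintani}, $W_{\alpha}(t) W_{1/\beta}(t) = \delta(t)\, s_{-\log|t|_p}(\alpha)\, s_{-\log|t|_p}(1/\beta)$ on the support $t_i \in \mathbb{Z}_p$, and the factor $\delta(t)$ cancels the $\delta^{-1}(t)$ implicit in $d^{\times}t$. Parametrizing $T(\mathbb{Z}_p)\backslash T(\mathbb{Q}_p)$ by $t_k = p^{m_k}$ with $m_k \geq 0$ reduces the integral to a discrete sum $c_n \sum_{m \in \mathbb{Z}_{\geq 0}^{n-1}} s_m(\alpha) s_m(1/\beta)$, where $c_n$ collects the powers of $\mathrm{vol}(\mathbb{Z}_p^{\times})$. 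Writing any partition $\lambda$ with $\lambda_n \geq 0$ as a shift of a partition $\mu$ with $\mu_n = 0$, and using $s_{(\mu_1+c,\ldots,\mu_n+c)}(x) = (x_1\cdots x_n)^c s_\mu(x)$, the classical Cauchy identity $\sum_\lambda s_\lambda(\alpha) s_\lambda(1/\beta) = \prod_{i,j}(1-\alpha_i/\beta_j)^{-1}$ yields the closed form
\begin{equation*}
J(\alpha,\beta) = c_n\,\bigl(1 - \alpha_1 \cdots \alpha_n\bigr) \prod_{i,j=1}^n \frac{1}{1-\alpha_i/\beta_j}
\end{equation*}
on $\mathbb{T}$ (where $\beta_1 \cdots \beta_n = 1$), initially in the regime $\max_i|\alpha_i|<1$ and then by meromorphic continuation.

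In the third step I would evaluate the contour integral by residue calculus. The integrand has simple poles at $\beta_j = \alpha_{i_j}$; only tuples $(i_1,\ldots,i_{n-1})$ of distinct indices contribute, since the Vandermonde-type factor $\prod_{i\neq j}(\beta_i-\beta_j)$ cancels the others. For each of the $n!$ such tuples, writing $i_n$ for the missing index, the constraint $\beta_n = 1/(\beta_1\cdots\beta_{n-1})$ forces $\beta_n = 1/(\alpha_{i_1}\cdots\alpha_{i_{n-1}})$, so that the factor $1/(1-\alpha_{i_n}/\beta_n)$ becomes $1/(1-\alpha_1\cdots\alpha_n)$. This pole cancels the $(1-\alpha_1\cdots\alpha_n)$ prefactor of $J$, so the degenerate limit $\alpha_1\cdots\alpha_n \to 1$ is finite. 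By the symmetry of $H$, each residue evaluates to $H(\alpha_{i_1},\ldots,\alpha_{i_n}) = H(\alpha)$, and the $n!$ identical contributions combine with the $1/n!$ prefactor to give $(H^{\flat})^{\sharp}(\alpha) = H(\alpha)$.

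The main obstacle is the third step. The subtle point is that $J(\alpha,\beta)$ vanishes identically on the constraint surface $\alpha_1\cdots\alpha_n = 1$, so the residue extraction is a $0/0$ limit: the vanishing prefactor $(1-\alpha_1\cdots\alpha_n)$ must be cancelled exactly against the pole arising from the constraint $\beta_n = 1/(\beta_1\cdots\beta_{n-1})$, and one must verify that the cancellation matches the combinatorics of the $n!$ permutations precisely. Setting up this bookkeeping (including the identification of $c_n$ and the correct normalization of $d^{\times}t$) is where the real work lies; Steps 1 and 2 are essentially formal once the Cauchy-type identity is identified.
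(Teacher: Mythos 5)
Your skeleton — substitute $H^{\flat}$, interchange the integrals, evaluate the inner Whittaker pairing by a Cauchy-identity/Stade-type closed form, then extract $H(\alpha)$ from $n!$ residues on $\mathbb{T}$ — is exactly the paper's strategy, and your algebraic observations in the third step (only the $n!$ distinct-index tuples survive because of the factor $\prod_{i\neq j}(\beta_i-\beta_j)$, and the constraint $\beta_n=1/(\beta_1\cdots\beta_{n-1})$ turns $1/(1-\alpha_{i_n}/\beta_n)$ into $1/(1-\alpha_1\cdots\alpha_n)$, cancelling the prefactor exactly) are correct. However, you have misplaced the main difficulty. The $0/0$ cancellation you flag as the obstacle is algebraic and exact; the genuine problem is that your Steps 1--2 are not justified at the $\alpha$ you actually need. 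Since $\alpha_1\cdots\alpha_n=1$ forces $\max_i|\alpha_i|\geq 1$, the inner integral $J(\alpha,\beta)=\sum_{\mathbf m}s_{\mathbf m}(\alpha)s_{\mathbf m}(1/\beta)$ diverges for $\beta\in\mathbb{T}$ (the terms are only polynomially bounded in $\mathbf m$, not decaying), so the double integral is not absolutely convergent and Fubini does not follow from the stated hypothesis, which only gives absolute convergence of the single integral $\int|H^{\flat}(t)W_\alpha(t)|\,d^{\times}t$. Invoking ``meromorphic continuation'' of $J$ does not repair this: you would be inserting the continued kernel into an identity that was never established at that $\alpha$, and moreover the continued kernel $\prod_{i,j}(1-\alpha_i/\beta_j)^{-1}$ has poles \emph{on or outside} the contour $\mathbb{T}$ when some $|\alpha_i|\geq 1$, so ``evaluate by residues at the poles $\beta_j=\alpha_{i_j}$ inside'' is not even well defined at the target point.

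The paper resolves exactly this by regularizing: it inserts the factor $\prod_k|t_k|_p^{\varepsilon(n-k)}$ into the Whittaker pairing, which simultaneously (i) makes the double integral absolutely convergent so Fubini applies, (ii) puts the Cauchy identity inside its domain of convergence (Proposition~\ref{innerprod} requires $\varepsilon>0$), and (iii) moves the poles to $\beta_j=\alpha_i/p^{\varepsilon}$, strictly inside the unit torus after first reducing (by holomorphy) to $|\alpha_i|=1$. It then shifts contours to $|z|=p^{-\varepsilon-\varepsilon'}$, keeps track of a leftover integral $\mathcal{I}_{\varepsilon,\varepsilon'}$ rather than claiming a clean residue sum, shows each residue tends to $H(\alpha)$ and the leftover tends to $0$ as $\varepsilon\to0$. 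Your argument could be salvaged along the lines you hint at — prove the identity for $\alpha$ in the open region $\max_i|\alpha_i|<1$ (where Fubini and the residue count are both legitimate) and then analytically continue \emph{both sides} in $\alpha$ to the surface $\alpha_1\cdots\alpha_n=1$ — but as written the proposal performs the residue extraction directly at a point where the contour integral is singular, and that step, together with the unjustified interchange, is a genuine gap. You would also still owe the Vandermonde/Jacobian bookkeeping showing each residue contributes exactly $H(\alpha)$ (the paper's $\mathcal{R}_{\varepsilon}^{\sigma}$ computation), and a word about non-distinct $\alpha_i$, where the poles are of higher order.
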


By studying the image of the map $H \mapsto H^{\flat}$ we obtain the following corollary.

%\begin{corollary}
%Let $H_1, H_2$ be as in Definition~\ref{invtransform}, and assume $(H_1^{\flat})^{\sharp}, (H_2^{\flat})^{\sharp}$ converge absolutely on $\mathcal{S}^n$. Then,
%\begin{equation*}
%\left\langle H_1, H_2 \right\rangle = \int \limits_{T(\mathbb{Z}_p) \backslash T(\mathbb{Q}_p)} d^{\times}t
%\end{equation*}
%\end{corollary}Z(\mathbb{Q}_p) \backslash T(\mathbb{Q}_p)

\begin{corollary}
\label{corollary}
Let $h : Z(\mathbb{Q}_p) \backslash T(\mathbb{Q}_p) \rightarrow \mathbb{C}$. For
\begin{equation*}
t = \left( \begin{array}{cccc}
t_1 \cdots t_{n-1} &  &  & \\
 & \ddots & &  \\
 & & t_1 & \\
 &  & & 1 \end{array} \right)
\end{equation*}
 assume that $h(t)$ is supported on the region $\{ t_i \in \mathbb{Z}_p \backslash \{ 0 \} : i=1,\cdots,n-1 \}$, that the integral which defines $h^{\sharp}$ (in Definition~\ref{whittakertransform}) is absolutely convergent, and that $h(t) = f(-|t_1|_p,\cdots,-|t_{n-1}|_p)$ for some function $f : \mathbb{Z}_{\geq 0}^{n-1} \rightarrow \mathbb{C}$. Then
\begin{equation*}
(h^{\sharp})^{\flat} = h.
\end{equation*}
\end{corollary}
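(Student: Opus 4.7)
The plan is to apply Theorem~\ref{transformthm} to $H := h^{\sharp}$ and then conclude by injectivity of the Whittaker transform on the class of functions satisfying the support and symmetry hypotheses of the corollary.

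I would first observe that $h^{\sharp}$ is a holomorphic symmetric function on some $\mathcal{S}_{\eta}^n$: holomorphy follows from the absolute convergence of the defining integral, and symmetry in $\alpha$ from the symmetry of the Schur polynomials appearing in $W_{\alpha}(t) = \delta^{1/2}(t)\, s_{-\log|t|_p}(\alpha)$. Breaking the integral defining $h^{\sharp}$ into contributions from the $(\zz_p^{\times})^{n-1}$-orbits $S_m = \{t : |t_k|_p = p^{-m_k}\}$ (with $m \in \zz_{\geq 0}^{n-1}$), on each of which $h$ is constant by hypothesis, a short computation using the formulas for $W_{\alpha}$ and $d^{\times}t$ yields a Schur expansion of the form
\begin{equation*}
h^{\sharp}(\alpha) = (1-p^{-1})^{n-1} \sum_{m \in \zz_{\geq 0}^{n-1}} f(-m_1,\ldots,-m_{n-1}) \, p^{\tfrac{1}{2} \sum_k m_k k(n-k)} \, s_{(m_1,\ldots,m_{n-1})}(\alpha),
\end{equation*}
whose coefficients are determined by the values of $h$ on the shells.

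To legitimately apply Theorem~\ref{transformthm} with $H = h^{\sharp}$, I would need to verify that $((h^{\sharp})^{\flat})^{\sharp}$ converges absolutely on $\mathcal{S}_{\eta'}^n$ for some $\eta' > 0$. I would do this by shifting the contour in the definition of $H^{\flat}$ from the unit torus $\mathbb{T}$ to $\{|\beta_i|_{\mathbb{C}} = p^{\tau}\}$ for some small $\tau \in (0,\eta)$, which is permitted by the holomorphy of $h^{\sharp}$ on $\mathcal{S}_{\eta}^n$; this produces bounds on $(h^{\sharp})^{\flat}(t)$ that decay exponentially in the shell label $m$, which in turn suffice to make the subsequent $\sharp$-transform converge on a slightly smaller annulus. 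Theorem~\ref{transformthm} then yields $((h^{\sharp})^{\flat})^{\sharp}(\alpha) = h^{\sharp}(\alpha)$ for all $\alpha$ with $\alpha_1 \cdots \alpha_n = 1$.

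Finally, by the remark following Definition~\ref{invtransform}, $(h^{\sharp})^{\flat}$ is supported on $(\zz_p \setminus \{0\})^{n-1}$ and invariant under $(\zz_p^{\times})^{n-1}$, placing it in exactly the same class as $h$; in particular it admits a Schur expansion of the same shape as the one displayed above. The coincidence of the two $\sharp$-transforms on the constraint surface, together with the linear independence of the Schur polynomials $s_{\mu}$ indexed by partitions with $\mu_n = 0$ on $\{\alpha_1 \cdots \alpha_n = 1\}$ (two such partitions differing by a common shift would force the shift to be zero, since the $n$-th entries both vanish), forces the Schur coefficients of $h$ and of $(h^{\sharp})^{\flat}$ to agree shell by shell, whence $(h^{\sharp})^{\flat} = h$. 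I expect the main obstacle to be the contour-shifting convergence check required to invoke Theorem~\ref{transformthm}; once that is in place, the injectivity step is a direct consequence of Schur polynomial linear independence.
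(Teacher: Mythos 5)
Your overall strategy---apply Theorem~\ref{transformthm} to $H:=h^{\sharp}$ and then deduce $(h^{\sharp})^{\flat}=h$ from injectivity of $\sharp$ on shell-constant functions---is genuinely different from the paper's, which goes the other way around: it shows every admissible $h$ lies in the \emph{image} of $\flat$ (exhibiting, for each shell indicator $f_{\lambda}$, the explicit preimage $H(\beta)=W_{\beta}(p^{\lambda})/\delta(p^{\lambda})$ and verifying $H^{\flat}=f_{\lambda}$ by orthogonality of Schur polynomials on the unit torus), and then applies the theorem to that preimage, for which the convergence hypothesis ``$(H^{\flat})^{\sharp}=h^{\sharp}$ converges absolutely'' is exactly the corollary's assumption. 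Your route instead obliges you to verify the convergence hypothesis for $H=h^{\sharp}$, i.e.\ absolute convergence of $((h^{\sharp})^{\flat})^{\sharp}$, which is \emph{not} among the corollary's hypotheses; this is where your argument has a real gap.

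The gap: shifting all the $\beta_i$ to a common contour $|\beta_i|_{\cc}=p^{\tau}$ does not produce bounds on $(h^{\sharp})^{\flat}(p^{\mathbf m})$ that decay in the shell label $\mathbf m$. The integrand contains $s_{\mathbf m}(1/\beta)$, and on any contour off the unit torus this factor \emph{grows} exponentially in $\mathbf m$: already for $n=2$ one has $s_{m}(\beta,\beta^{-1})=\sum_{j=0}^{m}\beta^{m-2j}$, whose supremum on $|\beta|_{\cc}=p^{\pm\tau}$ is at least $p^{\tau m}$, so the trivial estimate on the shifted integral grows rather than decays. The decay of these integrals comes from cancellation, not from the size of the integrand; to see it by contour shifting you would have to expand $s_{\mathbf m}(1/\beta)\prod_{i<j}(\beta_i-\beta_j)$ into its $n!$ monomials via the bialternant formula and move each monomial to its own polydisc, expanding in the variables with negative exponent and contracting in the others. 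Alternatively---and this is in substance the paper's proof---evaluate $I_{\mathbf m}:=\frac{1}{n!(2\pi i)^{n-1}}\int_{\mathbb T}h^{\sharp}(\beta)\,s_{\mathbf m}(1/\beta)\prod_{i\neq j}(\beta_i-\beta_j)\frac{d\beta_1\cdots d\beta_{n-1}}{\beta_1\cdots\beta_{n-1}}$ exactly: integrating your displayed Schur expansion of $h^{\sharp}$ term by term (legitimate by its uniform convergence on $\mathbb T$, which your hypothesis does provide) and using the orthogonality relation $\frac{1}{n!(2\pi i)^{n-1}}\int_{\mathbb T}s_{\mu}(\beta)s_{\mathbf m}(1/\beta)\prod_{i\neq j}(\beta_i-\beta_j)\frac{d\beta_1\cdots d\beta_{n-1}}{\beta_1\cdots\beta_{n-1}}=\delta_{\mu\mathbf m}$ gives $I_{\mathbf m}=h(p^{\mathbf m})\,p^{\frac{1}{2}\sum_k m_k k(n-k)}$, hence $(h^{\sharp})^{\flat}(p^{\mathbf m})=\delta^{1/2}(p^{\mathbf m})I_{\mathbf m}=h(p^{\mathbf m})$ directly, with no appeal to Theorem~\ref{transformthm} and no extra convergence issue. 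The same remark repairs your final step: mere linear independence of the $s_{\mu}$ with $\mu_n=0$ does not allow you to equate coefficients of two \emph{infinite} expansions; you need the orthogonality relation (or some other coefficient-extraction device), and once you invoke it you may as well compute $(h^{\sharp})^{\flat}$ outright.
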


We postpone the proofs of Theorem~\ref{transformthm} and Corollary~\ref{corollary} to the next section. As a consequence of Corollary~\ref{corollary} we have the Plancherel formula for the $p$-adic Whittaker transform.

\begin{corollary}[Plancherel formula]
Let $h_1, h_2$ be two functions that satisfy the conditions of Corollary~\ref{corollary}. We have,
\begin{equation*}
\langle h_1, h_2 \rangle = \langle h_1^{\sharp}, h_2^{\sharp} \rangle
\end{equation*}
where
\begin{align*}
\langle h_1, h_2 \rangle & \defeq \int \limits_{T(\mathbb{Z}_p) \backslash T(\mathbb{Q}_p)} h_1(t) \overline{h_2(t)}  d^{\times}t, \\
\langle h_1^{\sharp}, h_2^{\sharp} \rangle & \defeq \frac{1}{n! (2\pi i)^{n-1}} \int \limits_{\mathbb{T}} h_1^{\sharp}(\beta) \overline{h_2^{\sharp}(\beta)} \prod \limits_{\substack{i, j = 1 \\ i \neq j}}^n \left(\beta_i - \beta_j \right) \frac{d \beta_1 \cdots d \beta_{n-1}}{\beta_1 \cdots \beta_{n-1}}.
\end{align*}
\end{corollary}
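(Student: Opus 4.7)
The plan is to combine Corollary~\ref{corollary} with a Fubini-type interchange of integrals. Since $h_1$ satisfies the hypotheses of Corollary~\ref{corollary}, we have $h_1 = (h_1^{\sharp})^{\flat}$. Substituting the defining integral of $(h_1^{\sharp})^{\flat}$ from Definition~\ref{invtransform} into $\langle h_1, h_2 \rangle$ and formally swapping the order of integration turns the $t$-integral into $\int_{T(\mathbb{Z}_p)\backslash T(\mathbb{Q}_p)} W_{1/\beta}(t)\,\overline{h_2(t)}\, d^{\times}t$. If we can identify this with $\overline{h_2^{\sharp}(\beta)}$, then the remaining outer integral over $\mathbb{T}$ is exactly $\langle h_1^{\sharp}, h_2^{\sharp}\rangle$, finishing the proof.

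The identification rests on a symmetry of the Whittaker function on the torus $\mathbb{T}$. For $\beta \in \mathbb{T}$ all coordinates satisfy $|\beta_i|_{\mathbb{C}} = 1$ (including $\beta_n = (\beta_1 \cdots \beta_{n-1})^{-1}$), hence $1/\beta_i = \overline{\beta_i}$. Since $\delta^{1/2}(t)$ is real-valued and the Schur polynomial $s_\lambda(\alpha)$ is a ratio of determinants whose entries are monomials in the $\alpha_i$ with integer exponents, one has $\overline{s_\lambda(\beta)} = s_\lambda(\overline{\beta}) = s_\lambda(1/\beta)$, and therefore $\overline{W_\beta(t)} = W_{1/\beta}(t)$ for $\beta \in \mathbb{T}$. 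Consequently,
\[
\int W_{1/\beta}(t)\,\overline{h_2(t)}\, d^{\times}t \;=\; \overline{\int h_2(t)\, W_\beta(t)\, d^{\times}t} \;=\; \overline{h_2^{\sharp}(\beta)}.
\]

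The principal technical obstacle is the rigorous justification of the Fubini exchange on $\mathbb{T} \times \bigl(T(\mathbb{Z}_p)\backslash T(\mathbb{Q}_p)\bigr)$. One must show that the joint integrand $h_1^{\sharp}(\beta)\, W_{1/\beta}(t)\,\overline{h_2(t)}\, \prod_{i\ne j}(\beta_i - \beta_j)$ is absolutely integrable against the product measure. The compactness of $\mathbb{T}$ combined with the holomorphy (hence continuity) of $h_1^{\sharp}$ on $\mathcal{S}_\eta^n$, which is available via the decay hypothesis of Remark~\ref{convergence}, gives a uniform bound in $\beta$; the Vandermonde factor is trivially bounded on $\mathbb{T}$; and the hypothesis that $h_2^{\sharp}$ converges absolutely supplies the required integrability in $t$. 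Once the interchange is justified, chaining the two identifications above yields $\langle h_1, h_2\rangle = \langle h_1^{\sharp}, h_2^{\sharp}\rangle$.
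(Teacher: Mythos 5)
Your argument is correct and is precisely what the paper intends: the paper states the Plancherel formula with no written proof beyond the remark that it is a consequence of Corollary~\ref{corollary}, and your chain --- $h_1=(h_1^{\sharp})^{\flat}$, a Fubini interchange, and the identity $\overline{W_{\beta}(t)}=W_{1/\beta}(t)$ for $\beta\in\mathbb{T}$ (valid because the Schur polynomial has integer coefficients and $\overline{\beta_i}=1/\beta_i$ on the unit torus) --- is the standard way to make that one-line assertion precise. The only point to watch is the domination needed for Fubini: $\sup_{\beta\in\mathbb{T}}|s_{\mathbf{m}}(\beta)|\le s_{\mathbf{m}}(1,\dots,1)$ grows polynomially in $\mathbf{m}$, so one should appeal to absolute convergence of the integral defining $h_2^{\sharp}$ on all of $\mathcal{S}_{\eta}^{n}$ (equivalently, the decay condition of Remark~\ref{convergence}) rather than only at $|\beta_i|_{\mathbb{C}}=1$; you have flagged exactly this issue and it causes no difficulty.
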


\section{Proof of Theorem~\ref{transformthm}}
We start the section with the proof of two of the ingredients needed to prove the inverse transform theorem. Both results can also be found in Section 4 of \cite{bump}. They are included here as their proofs are concise and improve the exposition. The first of these is a version of Cauchy's identity.

\begin{lemma}[Cauchy's Identity]
\label{cauchy}
Let $s_{\lambda}$ be the Schur polynomial defined in the previous section. Let $\alpha_1,\cdots,\alpha_n,\beta_1,\cdots,\beta_n \in \mathbb{C}$ such that $|\alpha_i \beta_j| < 1$ for every $1 \leq i, j \leq n$. Then,
\begin{equation*}
\sum \limits_{m_1,\cdots, m_{n-1} \geq 0} s_{\mathbf{m}}\left(\alpha\right) s_{\mathbf{m}}\left(\beta\right) \\
 = \frac{1-\alpha_1 \cdots \alpha_n \beta_1 \cdots \beta_n}{\prod \limits_{i,j=1}^n \left(1-\alpha_i \beta_j \right)}
 \end{equation*}
 where $\mathbf{m} = (m_1,\cdots,m_{n-1})$
\end{lemma}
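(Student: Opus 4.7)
The plan is to recast the sum over $(m_1, \ldots, m_{n-1})$ as a sum over partitions $\mu$ with $\mu_n = 0$, then reduce to the classical Cauchy identity for Schur polynomials. From the bialternant formula recalled in Theorem~\ref{shintani}, the indexing $(m_1, \ldots, m_{n-1}) \in \zz_{\geq 0}^{n-1}$ corresponds bijectively to partitions $\mu = (\mu_1 \geq \cdots \geq \mu_{n-1} \geq \mu_n = 0)$ via $\mu_j = m_1 + \cdots + m_{n-j}$, and under this reindexing $s_{\mathbf{m}}(\alpha)$ coincides with the usual $s_\mu(\alpha)$ (both being the ratio of the same two alternants). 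Hence the left-hand side equals $\sum_{\mu : \mu_n = 0} s_\mu(\alpha) s_\mu(\beta)$.

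Next I would pass from this restricted sum to the full sum over partitions of length at most $n$: every such $\lambda$ decomposes uniquely as $\lambda = \mu + (k,\ldots,k)$ with $k = \lambda_n \geq 0$ and $\mu_n = 0$, giving $s_\lambda(\alpha) = (\alpha_1 \cdots \alpha_n)^k s_\mu(\alpha)$. Summing geometrically in $k$ (justified by $|\alpha_i \beta_j| < 1$, which also secures absolute convergence throughout) yields
\begin{equation*}
\sum_{\ell(\lambda) \leq n} s_\lambda(\alpha)\, s_\lambda(\beta) \;=\; \frac{1}{1 - \alpha_1 \cdots \alpha_n \beta_1 \cdots \beta_n}\, \sum_{\mu : \mu_n = 0} s_\mu(\alpha)\, s_\mu(\beta),
\end{equation*}
so the claim reduces to the classical identity $\sum_{\ell(\lambda) \leq n} s_\lambda(\alpha) s_\lambda(\beta) = \prod_{i,j}(1-\alpha_i\beta_j)^{-1}$.

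To keep the argument self-contained I would establish this classical identity by combining Cauchy's determinant evaluation $\det\bigl((1-\alpha_i\beta_j)^{-1}\bigr) = V(\alpha) V(\beta) \prod_{i,j}(1-\alpha_i\beta_j)^{-1}$, where $V(\alpha) = \prod_{i<j}(\alpha_i - \alpha_j)$, with the Cauchy--Binet formula. Namely, the geometric expansion $(1-\alpha_i\beta_j)^{-1} = \sum_{k \geq 0} \alpha_i^k \beta_j^k$ factors the matrix as $AB$ with $A_{i,k} = \alpha_i^k$ and $B_{k,j} = \beta_j^k$ (for $k \in \zz_{\geq 0}$), and Cauchy--Binet expands $\det(AB)$ as a sum over $n$-element subsets $\{k_1 < \cdots < k_n\} \subset \zz_{\geq 0}$; each such subset is encoded by a partition $\lambda$ via $k_j = \lambda_{n+1-j} + j - 1$, and the resulting product of minors is $V(\alpha) V(\beta)\, s_\lambda(\alpha)\, s_\lambda(\beta)$. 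Equating the two evaluations of the determinant and cancelling $V(\alpha) V(\beta)$ finishes the proof.

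The principal obstacle is routine bookkeeping rather than conceptual: tracking the sign introduced by the column reversal when converting Cauchy--Binet exponents $k_j$ into partition exponents $\lambda_j + n - j$, and verifying that the hypothesis $|\alpha_i \beta_j| < 1$ gives enough decay (via the usual dominant-series estimate $\sum_\lambda s_\lambda(|\alpha|) s_\lambda(|\beta|) = \prod (1 - |\alpha_i \beta_j|)^{-1} < \infty$) to legitimately interchange the geometric expansions with the determinant and the summation over $k$.
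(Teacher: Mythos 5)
Your proposal is correct and follows essentially the same route as the paper: both rest on Cauchy's determinant evaluation of $\det\bigl((1-\alpha_i\beta_j)^{-1}\bigr)$, a geometric expansion of the entries reorganized into a sum of products of alternants, and a final geometric summation over the common part $k=\lambda_n$ (the paper's $m_0$) to pass between the full partition sum and the sum with $\mu_n=0$. The only cosmetic difference is that you package the expansion-plus-antisymmetrization step as an application of Cauchy--Binet, whereas the paper carries it out by hand by noting that terms with repeated exponents cancel.
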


\begin{proof}
This proof follows \cite{goldfeldbook} and \cite{macdonald}.
We start by stating the Cauchy determinant identity (lemma 7.4.18 in \cite{goldfeldbook})
\begin{equation*} \left| \begin{array}{ccc}
\frac{1}{1-\alpha_1 \beta_1}  & \cdots & \frac{1}{1-\alpha_n \beta_1} \\
\vdots  & \ddots &  \vdots \\
\frac{1}{1-\alpha_1 \beta_n}  & \cdots & \frac{1}{1-\alpha_n \beta_n} \\ \end{array} \right| = \frac{\prod \limits_{1 \leq i < j \leq n} (\alpha_i - \alpha_j) \prod \limits_{1 \leq i < j \leq n} (\beta_i - \beta_j)}{\prod \limits_{i,j = 1}^n (1 - \alpha_i \beta_j)}.
\end{equation*}
Expanding each term $\frac{1}{1-\alpha_i \beta_j}$ as $1 + \alpha_i \beta_j + \alpha_i^2 \beta_j^2 + \cdots$, we obtain

\begin{equation*}
\left|  \begin{array}{ccc}
\frac{1}{1-\alpha_1 \beta_1}  & \cdots & \frac{1}{1-\alpha_n \beta_1} \\
\vdots  & \ddots &  \vdots \\
\frac{1}{1-\alpha_1 \beta_n} & \cdots & \frac{1}{1-\alpha_n \beta_n} \\ \end{array} \right| = \sum \limits_{l_1,\cdots,l_n \geq 0} \, \sum \limits_{\sigma \in S_n} \sgn(\sigma) \alpha_{\sigma(1)}^{l_1} \cdots \alpha_{\sigma(n)}^{l_n} \beta_1^{l_1} \cdots \beta_n^{l_n}.
\end{equation*}

Notice that if the $l_i$ are not all distinct then the sum over $S_n$ vanishes. Therefore
\begin{align*}
\left| \begin{array}{ccc}
\frac{1}{1-\alpha_1 \beta_1}  & \cdots & \frac{1}{1-\alpha_n \beta_1} \\
\vdots  & \ddots &  \vdots \\
\frac{1}{1-\alpha_1 \beta_n}  & \cdots & \frac{1}{1-\alpha_n \beta_n} \\ \end{array} \right| & = \sum \limits_{\substack{l_1 > \cdots > l_n \geq 0 \\ \sigma, \tau \in S_n}} \sgn(\sigma) \sgn(\tau) \alpha_{\sigma(1)}^{l_1} \cdots \alpha_{\sigma(n)}^{l_n} \beta_{\tau(1)}^{l_1} \cdots \beta_{\tau(n)}^{l_n} \\
& = \sum \limits_{l_1 > \cdots > l_n \geq 0} \left| \begin{array}{ccc}
\alpha_1^{l_1}  & \cdots & \alpha_n^{l_1} \\
\vdots  & \ddots &  \vdots \\
\alpha_1^{l_n}  & \cdots & \alpha_n^{l_n} \\ \end{array} \right| \left| \begin{array}{ccc}
\beta_1^{l_1}  & \cdots & \beta_n^{l_1} \\
\vdots  & \ddots &  \vdots \\
\beta_1^{l_n}  & \cdots & \beta_n^{l_n} \\ \end{array} \right|.
\end{align*}
By making the substitution $(l_1,\cdots,l_n) = (m_0+\cdots+m_{n-1}+(n-1), m_0+\cdots+m_{n-2}+(n-2), \cdots, m_0)$ and dividing by $\prod \limits_{1 \leq i < j \leq n} (\alpha_i - \alpha_j) \prod \limits_{1 \leq i < j \leq n} (\beta_i - \beta_j)$ we obtain
\begin{equation*}
 \frac{1}{\prod \limits_{i,j=1}^n \left(1-\alpha_i \beta_j \right)} = \sum \limits_{m_0, m_1,\cdots, m_{n-1} \geq 0} s_{\bf{m}} \left(\alpha\right) s_{\bf{m}}\left(\beta\right) (\alpha_1 \cdots \alpha_n \beta_1 \cdots \beta_n)^{m_0},
 \end{equation*}
where right hand side simplifies to
\begin{equation*}
\frac{1}{1-\alpha_1 \cdots \alpha_n \beta_1 \cdots \beta_n} \sum \limits_{m_1,\cdots, m_{n-1} \geq 0} s_{\bf{m}}\left(\alpha\right) s_{\bf{m}}\left(\beta\right).
\end{equation*}
Multiplying out by $1-\alpha_1 \cdots \alpha_n \beta_1 \cdots \beta_n$ concludes the proof.
\end{proof}

The second necessary ingredient is an identity for the integral of a product of two Whittaker function, as in \cite{stade}.
\begin{proposition}
\label{innerprod}
Let $\alpha_1,\cdots,\alpha_n,\beta_1,\cdots,\beta_n \in \mathbb{C}$ such that $|\alpha_i \beta_j| \leq 1$ for every $1 \leq i, j \leq n$, and $\varepsilon > 0$. Let $W_{\alpha}$, $W_{\beta}$ be Whittaker functions as defined in Definition \ref{invtransform}. Then
\begin{equation*}
\int \limits_{Z(\mathbb{Q}_p) \backslash T(\mathbb{Q}_p)} W_{\alpha}(t) W_{\beta}(t) \prod_{k=1}^{n-1} |t_k|_p^{\varepsilon(n-k)}  d^{\times}t = \frac{1-\frac{\alpha_1 \cdots \alpha_n \beta_1 \cdots \beta_n}{p^{\varepsilon n}}}{\prod \limits_{i,j=1}^n \left(1-\frac{\alpha_i \beta_j}{p^{\varepsilon}} \right)} 
\end{equation*}
\end{proposition}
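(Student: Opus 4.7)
The plan is to unfold the integral by a direct substitution in Iwasawa-style coordinates and then to recognize the resulting series as Cauchy's identity applied to a rescaled set of parameters. First, I would parametrize the quotient $Z(\mathbb{Q}_p)\backslash T(\mathbb{Q}_p)$ by $(t_1,\ldots,t_{n-1}) \in (\mathbb{Q}_p^\times)^{n-1}$, using the Iwasawa coordinates set up before Shintani's theorem. Because $W_\alpha(t)W_\beta(t)$ vanishes unless every $t_k \in \mathbb{Z}_p$, and because the integrand is invariant under the compact group $T(\mathbb{Z}_p) \cong (\mathbb{Z}_p^\times)^{n-1}$, the integral collapses to a sum over $(m_1,\ldots,m_{n-1}) \in \mathbb{Z}_{\geq 0}^{n-1}$, with $|t_k|_p = p^{-m_k}$ and each coset of $T(\mathbb{Z}_p)$ carrying mass $1$ under $\prod_k \frac{dt_k}{|t_k|_p}$.

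Next, I would track the scalar factors carefully. The product $\delta(t)=\prod_k |t_k|_p^{k(n-k)}$ appears once from $W_\alpha(t)W_\beta(t) = \delta(t)\, s_{m}(\alpha)\, s_{m}(\beta)$, while the measure $d^\times t$ contributes exactly $\prod_k |t_k|_p^{-k(n-k)}$. These cancel outright, so the integral equals
\begin{equation*}
\sum_{m_1,\ldots,m_{n-1} \geq 0} s_{m}(\alpha)\, s_{m}(\beta) \prod_{k=1}^{n-1} p^{-\varepsilon m_k (n-k)}.
\end{equation*}

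Now I would invoke the homogeneity of the Schur polynomial $s_{m}$: from the determinantal formula in Theorem~\ref{shintani}, $s_m$ is homogeneous of total degree $\sum_{k=1}^{n-1}(n-k)m_k$ in the variables $\alpha_1,\ldots,\alpha_n$. Hence the extra weight $\prod_k p^{-\varepsilon m_k(n-k)}$ is precisely $s_m(\beta/p^\varepsilon)/s_m(\beta)$, and the sum becomes
\begin{equation*}
\sum_{m_1,\ldots,m_{n-1} \geq 0} s_{m}(\alpha)\, s_{m}(\beta/p^\varepsilon).
\end{equation*}
Since $|\alpha_i \beta_j/p^\varepsilon| < 1$ (as $|\alpha_i\beta_j|\leq 1$ and $\varepsilon>0$), Cauchy's identity (Lemma~\ref{cauchy}) applied to the $n$-tuples $\alpha$ and $\beta/p^\varepsilon$ yields exactly the right-hand side of the proposition. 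The only subtle step is the degree-tracking that lets the $p^{-\varepsilon m_k(n-k)}$ factors be absorbed into a rescaling of $\beta$; everything else is a bookkeeping exercise in matching the Haar measure normalization to Shintani's formula.
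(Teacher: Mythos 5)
Your argument is correct and follows essentially the same route as the paper: unfold both Whittaker functions via Shintani's formula, note that $\delta(t)$ cancels against the measure, reduce the integral over $(\mathbb{Z}_p\setminus\{0\})^{n-1}$ to a sum over valuations, absorb the $p^{-\varepsilon m_k(n-k)}$ weights into a rescaling of one set of Schur-polynomial arguments by homogeneity (the paper rescales $\alpha$ where you rescale $\beta$, which is immaterial), and conclude with Cauchy's identity. The homogeneity degree $\sum_k (n-k)m_k$ you cite is indeed what the determinantal formula gives, so the key step checks out.
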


\begin{proof}
We start by applying Theorem \ref{shintani} to write down the Whittaker functions explicitly:
\begin{align*}
& \int \limits_{Z(\mathbb{Q}_p) \backslash T(\mathbb{Q}_p)} W_{\alpha}(t) W_{\beta}(t) \prod_{k=1}^{n-1} |t_k|_p^{\varepsilon(n-k)}  d^{\times}t \\
& = \int \limits_{\mathbb{Z}_p^{n-1}} \delta(t) s_{\text{\rm-}\!\log|t|_p}(\alpha) s_{\text{\rm-}\!\log|t|_p}\left(\beta\right) \prod_{k=1}^{n-1} |t_k|_p^{-(k-\varepsilon)(n-k)} \frac{dt_k}{|t_k|_p} \\ 
& = \int \limits_{\mathbb{Z}_p^{n-1}} s_{\text{\rm-}\!\log|t|_p}(\alpha) s_{\text{\rm-}\!\log|t|_p}\left(\beta\right) \prod_{k=1}^{n-1} |t_k|_p^{\varepsilon(n-k)} \frac{dt_k}{|t_k|_p}.
\end{align*}
Breaking up the region of integration by absolute value we get
\begin{align*}
& \int \limits_{Z(\mathbb{Q}_p) \backslash T(\mathbb{Q}_p)} W_{\alpha}(t) W_{\beta}(t) \prod_{k=1}^{n-1} |t_k|_p^{\varepsilon(n-k)}  d^{\times}t \\
& = \sum \limits_{m_1,\cdots, m_{n-1} \geq 0} \, \, \overbrace{\idotsint}^{n-1 \text{ times}} \limits_{p^{m_k} \mathbb{Z}_p^{\times}}  s_{\text{\rm-}\!\log|t|_p}(\alpha) s_{\text{\rm-}\!\log|t|_p}\left(\beta\right) \prod_{k=1}^{n-1} |t_k|_p^{\varepsilon(n-k)} \frac{dt_k}{|t_k|_p} \\
& = \sum \limits_{m_1,\cdots, m_{n-1} \geq 0} s_{\mathbf{m}}(\alpha) s_{\mathbf{m}}\left(\beta \right) p^{-\varepsilon(n-1)m_1 -\varepsilon(n-2)m_2 + \cdots -\varepsilon m_{n-1}} \\
& = \sum \limits_{m_1,\cdots, m_{n-1} \geq 0} s_{\mathbf{m}}\left(\frac{\alpha}{p^{\varepsilon}}\right) s_{\mathbf{m}}\left(\beta\right) \\
& = \frac{1-\frac{\alpha_1 \cdots \alpha_n \beta_1 \cdots \beta_n}{p^{\varepsilon n}}}{\prod \limits_{i,j=1}^n \left(1-\frac{\alpha_i \beta_j}{p^{\varepsilon}} \right)} 
\end{align*}
where the last equality follows from Cauchy's identity \ref{cauchy}.
\end{proof}

We are now ready to proceed to the proof of the Theorem~\ref{transformthm}.

\begin{proof}[Proof of Theorem~\ref{transformthm}]
Restate the theorem as \begin{equation*}
H(\alpha) = \int \limits_{Z(\mathbb{Q}_p) \backslash T(\mathbb{Q}_p)} H^{\flat}(t) W_{\alpha}(t) d^{\times}t.
\end{equation*}
Recall that $\alpha \in \mathcal{S}_{\eta}^n$ and $\alpha_1 \cdots \alpha_n = 1$. Further assume that $|\alpha_1|_{\mathbb{C}} = \cdots = |\alpha_{n}|_{\mathbb{C}} = 1$. We can later remove this assumption as both sides of the equality are holomorphic functions. Note that $H$ is invariant under permutations of $(\alpha_1,\cdots,\alpha_n)$ as the Whittaker function also has those symmetries.
Define
\begin{equation*}
H_{\varepsilon}(\alpha) = \int \limits_{Z(\mathbb{Q}_p) \backslash T(\mathbb{Q}_p)}H^{\flat}(t) W_{\alpha}(t) \prod_{k=1}^{n-1} |t_k|_p^{\varepsilon(n-k)} d^{\times}t.
\end{equation*}
Then
\begin{equation*}
\lim \limits_{\varepsilon \rightarrow 0} H_{\varepsilon}(\alpha) = \int \limits_{Z(\mathbb{Q}_p) \backslash T(\mathbb{Q}_p)} H^{\flat}(t) W_{\alpha}(t) d^{\times}t.
\end{equation*}
It suffices to show that $ \lim \limits_{\varepsilon \rightarrow 0} H_{\varepsilon}(\alpha) = H(\alpha)$ as well. \\
Replacing $H^{\flat}(t)$ by its explicit formula and swapping the order of integration we get
\begin{multline*}
H_{\varepsilon}(\alpha) = \frac{1}{n!(2\pi i)^{n-1}} \int \limits_{\mathbb{T}} \left( \, \int \limits_{Z(\mathbb{Q}_p) \backslash T(\mathbb{Q}_p)} W_{\alpha}(t) W_{1/\beta}(t) \prod_{k=1}^{n-1} |t_k|_p^{\varepsilon(n-k)}  d^{\times}t \right) \times \\
H(\beta) \prod \limits_{\substack{i, j = 1 \\ i \neq j}}^n \left(\beta_i - \beta_j \right) \frac{d \beta_1 \cdots d \beta_{n-1}}{\beta_1 \cdots \beta_{n-1}} 
\end{multline*}
We use Proposition \ref{innerprod} to compute the innermost integrals and get
\begin{equation*}
H_{\varepsilon}(\alpha) = \frac{1}{n!(2\pi i)^{n-1}} \int \limits_{\mathbb{T}} \frac{ H(\beta) \left(1-\frac{1}{p^{\varepsilon n}}\right)}{\prod \limits_{i,j=1}^n \left(\beta_j-\frac{\alpha_i}{p^{\varepsilon}} \right)} \prod \limits_{\substack{i, j = 1 \\ i \neq j}}^n \left(\beta_i - \beta_j \right) \frac{d \beta_1 \cdots d \beta_{n-1}}{\beta_1 \cdots \beta_{n-1}}
\end{equation*}
Now shift each variable $\beta_i$ ($i=1,\cdots,n-1$) to the contour given by the equation $|z|_{\mathbb{C}} = \frac{1}{p^{\varepsilon+\varepsilon'}}$, in order. For the sake of clearness, we shall assume that all the $\alpha_i$ are distinct.
When the contour of integration for $\beta_1$ is shifted one picks up several residues at
\begin{equation*}
\beta_1 = \frac{\alpha_1}{p^{\varepsilon}}, \cdots, \beta_1 = \frac{\alpha_{n}}{p^{\varepsilon}}.
\end{equation*}
For which residue integral obtained in this manner one can shift the contour of integration for $\beta_2$ picking up $n-1$ residues in the process. Repeating this process for which $\beta_i$, $i=3,\cdots,n-1$ one obtains
\begin{equation*}
H_{\varepsilon}(\alpha) = \frac{1}{n!} \sum \limits_{\sigma \in S_n} \mathcal{R}_{\varepsilon}^{\sigma} + \mathcal{I}_{\varepsilon,\varepsilon'}
\end{equation*}this
where
\begin{equation*}
\mathcal{R}_{\varepsilon}^{\sigma} = \frac{H\left(\frac{\alpha_{\sigma(1)}}{p^{\varepsilon}},\cdots,\frac{\alpha_{\sigma(n)}}{p^{\varepsilon}} \right) \left(1-\frac{1}{p^{\varepsilon n}}\right) \prod \limits_{i=1}^{n-1} \left( \frac{\alpha_{\sigma(i)}}{p^{\varepsilon}} - p^{(n-1)\varepsilon} \alpha_{\sigma(n)} \right)}{\left(p^{(n-1)\varepsilon} \alpha_{\sigma(n)} - \frac{\alpha_{\sigma(n)}}{p^{\varepsilon}}\right) \prod \limits_{i=1}^{n-1} \left( \frac{\alpha_{\sigma(i)}}{p^{\varepsilon}} - \frac{\alpha_{\sigma(n)}}{p^{\varepsilon}} \right) \prod \limits_{i=1}^{n-1} \frac{\alpha_{\sigma(i)}}{p^{\varepsilon}}}
\end{equation*}
and
$\mathcal{I}_{\varepsilon,\varepsilon'}$ is a sum of residue integrals with each integrand bounded (in absolute value) by

\begin{equation*}
C_H \frac{ \left(1 - \frac{1}{p^{n\varepsilon}} \right) p^{\varepsilon(n^3+n)}}{1 - \frac{1}{p^{\varepsilon'}}}
\end{equation*}
for some constant $C_H > 0$ depending only on the function $H$.
Therefore,
\begin{align*}
\lim \limits_{\varepsilon \rightarrow 0} \mathcal{R}_{\varepsilon}^{\sigma} 
= & \lim \limits_{\varepsilon \rightarrow 0} \frac{H\left(\frac{\alpha_{\sigma(1)}}{p^{\varepsilon}},\cdots,\frac{\alpha_{\sigma(n)}}{p^{\varepsilon}} \right)}{\prod \limits_{i=1}^{n} \frac{\alpha_{\sigma(i)}}{p^{\varepsilon}}} \frac{ \left(1-\frac{1}{p^{\varepsilon n}}\right)}{\left(p^{(n-1)\varepsilon} - \frac{1}{p^{\varepsilon}}\right)} \frac{\prod \limits_{i=1}^{n-1} \left( \frac{\alpha_{\sigma(i)}}{p^{\varepsilon}} - p^{(n-1)\varepsilon} \alpha_{\sigma(n)} \right)}{\prod \limits_{i=1}^{n-1} \left( \frac{\alpha_{\sigma(i)}}{p^{\varepsilon}} - \frac{\alpha_{\sigma(n)}}{p^{\varepsilon}} \right)} \\
& = \frac{H\left(\alpha_{\sigma(1)},\cdots,\alpha_{\sigma(n)} \right)}{\prod \limits_{i=1}^{n} \alpha_{\sigma(i)}} \frac{\prod \limits_{i=1}^{n-1} \left( \alpha_{\sigma(i)} - \alpha_{\sigma(n)} \right)}{\prod \limits_{i=1}^{n-1} \left( \alpha_{\sigma(i)} - \alpha_{\sigma(n)} \right)} \\
& = H\left(\alpha_{\sigma(1)},\cdots,\alpha_{\sigma(n)} \right) \\
& = H(\alpha)
\end{align*}
and
\begin{equation*}
\lim \limits_{\varepsilon \rightarrow 0} |\mathcal{I}_{\varepsilon,\varepsilon'}| \ll_{H} \lim \limits_{\varepsilon \rightarrow 0} \frac{ \left(1 - \frac{1}{p^{n\varepsilon}} \right) p^{\varepsilon(n^3+n)}}{1 - \frac{1}{p^{\varepsilon'}}} = 0.
\end{equation*}
In conclusion, we obtain
\begin{equation*}
\lim \limits_{\varepsilon \rightarrow 0} H_{\varepsilon}(\alpha) = \frac{1}{n!} \sum \limits_{\sigma \in S_n} H(\alpha) = H(\alpha)
\end{equation*}
as desired.
If the $\alpha_i$ are not all distinct then the original integrand would have higher order poles. This means that the residue sum would have a small number of residue but some residues would contribute to the sum with a multiple of $H(\alpha)$.
\end{proof}

To prove the corollary one simply needs to compute the image of the inverse transform $H \mapsto H^{\flat}$ as any function $h$ in the image of this mapping will satisfy $(h^{\sharp})^{\flat} = h$. Simply choose $H$ such that $h = H^{\flat}$, and apply Theorem~\ref{transformthm} to $H$, to obtain $(h^{\sharp})^{\flat} = ((H^{\flat})^{\sharp})^{\flat} = H^{\flat} = h$.

\begin{proof}[Proof of Corollary~\ref{corollary}]
We start by noting that any function $h$ satisfying the assumptions in the corollary is a sum of scalar multiples of functions of the form:

\begin{equation*}
f_{\lambda_1,\cdots,\lambda_{n-1}}(t) = \begin{cases} 1, & \mbox{if } \log |t_i|_p = -\lambda_i \text{ for } i=1,\cdots,n-1, \\ 0, & \mbox{otherwise.}  \end{cases}
\end{equation*}
where $\lambda_1,\cdots,\lambda_{n-1}$ are nonnegative integers. Therefore, it suffices to show that all such functions are in the image of the map of the inverse transform $H \mapsto H^{\flat}$.
Let $H(\beta) = \frac{W_{\beta}(p^\lambda)}{\delta(p^{\lambda})}$ where
\begin{equation*}
p^{\lambda} = \left( \begin{array}{cccc}
p^{\lambda_1} \cdots p^{\lambda_{n-1}} &  &  & \\
 & \ddots & &  \\
 & & p^{\lambda_1} & \\
 &  & & 1 \end{array} \right).
\end{equation*}
Then
\begin{align*}
H^{\flat}(t) & = \frac{1}{\delta(p^{\lambda}) n! (2\pi i)^{n-1}} \int \limits_{\mathbb{T}} W_{\beta}(p^\lambda) W_{1/\beta}(t) \prod \limits_{\substack{i, j = 1 \\ i \neq j}}^n \left(\beta_i - \beta_j \right) \frac{d \beta_1 \cdots d \beta_{n-1}}{\beta_1 \cdots \beta_{n-1}} \\
& = \frac{\delta^{1/2}(t)}{\delta^{1/2}(p^{\lambda}) n! (2\pi i)^{n-1}} \int \limits_{\mathbb{T}} s_{\lambda}(\beta) s_{\text{\rm-}\!\log|t|_p}(1/\beta) \prod \limits_{\substack{i, j = 1 \\ i \neq j}}^n \left(\beta_i - \beta_j \right) \frac{d \beta_1 \cdots d \beta_{n-1}}{\beta_1 \cdots \beta_{n-1}}.
\end{align*}
By the definition of Schur polynomials we further infer that
\begin{align*}
H^{\flat}(t) & = \frac{\delta^{1/2}(t)}{\delta^{1/2}(p^{\lambda}) n! (2\pi i)^{n-1}} \int \limits_{\mathbb{T}} \sum \limits_{\sigma, \tau \in S_n} \left(\beta_{\sigma(1)}^{(n-1)+\lambda_1+\cdots+\lambda_{n-1}} \cdots \beta_{\sigma(n-1)}^{1+\lambda_1} \right) \times \\
&  \left( \beta_{\tau(1)}^{-(n-1)+\log|t_1|_p+\cdots+\log|t_{n-1}|_p} \cdots \beta_{\tau(n-1)}^{-1+\log|t_1|_p} \right) \frac{d \beta_1 \cdots d \beta_{n-1}}{\beta_1 \cdots \beta_{n-1}} \\
& = \frac{\delta^{1/2}(t)}{\delta^{1/2}(p^{\lambda}) n! (2\pi i)^{n-1}} \times \\
& \int \limits_{\mathbb{T}} \sum \limits_{\sigma \in S_n} \left(\beta_{\sigma(1)}^{\lambda_1+\cdots+\lambda_{n-1}+\log|t_1|_p+\cdots+\log|t_{n-1}|_p} \cdots \beta_{\sigma(n-1)}^{\lambda_1+\log|t_1|_p} \right) \frac{d \beta_1 \cdots d \beta_{n-1}}{\beta_1 \cdots \beta_{n-1}} \\
& = \frac{ \delta^{1/2}(t)}{\delta^{1/2}(p^{\lambda})}  f_{\lambda_1,\cdots,\lambda_{n-1}}(t) \\
& = f_{\lambda_1,\cdots,\lambda_{n-1}}(t)
\end{align*}
where the last equality follows from the definition of $f_{\lambda_1,\cdots,\lambda_{n-1}}(t)$.
\end{proof}

\section{Integral representations of local $L$-factors}

In this final section, we obtain integral representations of the local $L$-factors of symmetric $d$th power $L$-functions of $\text{GL}(2)$ representations, with $d \leq 4$.
Let $L(s, \text{Sym}^d \, \pi)$ be the symmetric $d$th power $L$-function associated to an irreducible automorphic representation $\pi$ of $\text{GL}(2,\mathbb{A}_\qq)$. This $L$-function has local factors (at the unramified places $p$) given by
\begin{equation*}
L_p(s,\text{Sym}^d \, \pi) = \prod_{i=0}^{d} (1-\alpha^{d-2i}p^{-s})^{-1} =: h_{s,p,d}(\alpha).
\end{equation*}

We shall obtain an integral representation for the $L$-factors $L_p(s,\pi)$ using Theorem~\ref{transformthm}. 

Assume $\mathfrak{R}(s) > 1$. We start by computing $(h_{s,p,d})^{\flat}$ explicitly. By definition,

\begin{align*}
(h_{s,p,d})^{\flat}(t) & = \frac{1}{4\pi i} \int \limits_{\lvert \beta \rvert_{\cc} = 1} h_{s,p,d}(\beta) W_{1/\beta}(t) (\beta - \beta^{-1})(\beta^{-1} - \beta) \frac{d \beta}{\beta} \\
& = \frac{\lvert t_1 \rvert_p^{1/2}}{4\pi i} \int \limits_{\lvert \beta \rvert_{\cc} = 1} h_{s,p,d}(\beta) \left( \beta^{\lambda+1} - \beta^{-(\lambda+1)} \right) \left(\beta^{-1} - \beta \right) \frac{d \beta}{\beta}
\end{align*}
with $t = \begin{pmatrix}
t_1 & 0  \\
0 & 1 \end{pmatrix}$ and $\lvert t_1 \rvert_p = p^{-\lambda}$ for $\lambda \geq 0$.

We now want to use the residue theorem to evaluate the integral above. To simplify the calculation of the residues of the integrand we first note that

\begin{align*}
\int \limits_{\lvert \beta \rvert_{\cc} = 1} h_{s,p,d}(\beta) \beta^{-(\lambda+1)} \left(\beta^{-1} - \beta \right) \frac{d \beta}{\beta}  = - \int \limits_{\lvert \beta \rvert_{\cc} = 1} h_{s,p,d}(\beta) \left( \beta^{\lambda+1} \right) \left(\beta^{-1} - \beta \right) \frac{d \beta}{\beta}
\end{align*}

via the change of variable $\beta \mapsto \frac{1}{\beta}$. Therefore,

\begin{equation}
\label{eq:integral}
(h_{s,p,d})^{\flat}(t) = \frac{\lvert t_1 \rvert_p^{1/2}}{2\pi i} \int \limits_{\lvert \beta \rvert_{\cc} = 1} h_{s,p,d}(\beta)  \left(\beta^{\lambda-1} - \beta^{\lambda+1} \right) d \beta.
\end{equation}

Let $i_{s,p,d}(\beta)$ be the integrand of the integral in Equation~\ref{eq:integral}. By the definition of $h_{s,p,d}$, the function $i_{s,p,d}(\beta)$ has possible poles inside the unit circle at

\begin{equation*}
\beta = e^{\frac{2\pi i l}{k}} p^{-\frac{s}{k}}
\end{equation*}

for $0 < k \leq d$, $k \equiv d \pmod 2$ and $0 \leq l < k$. We will compute the residues at these poles for $1 \leq d \leq 4$, in order to compute the integral in Equation~\ref{eq:integral}.

\begin{itemize}
\item $d=1$:

In this case, the function $i_{s,p,1}(\beta)$ only has a pole at $\beta = p^{-s}$ and

\begin{equation*}
\text{Res}_{\beta = p^{-s}} \, i_{s,p,1}(\beta) = p^{-s \lambda},
\end{equation*}
which implies that 
\begin{equation}
\label{firstlift}
(h_{s,p,1})^{\flat}(t) = \lvert t_1 \rvert_p^{s+1/2}.
\end{equation}

\item $d=2$:

In the case of a symmetric square $L$-function, the integrand $i_{s,p,2}(\beta)$ has poles at $\beta = \pm p^{-s/2}$. Their residues are given by
\begin{align*}
&\text{Res}_{\beta = p^{-s/2}} \, i_{s,p,2}(\beta)  = \frac{p^{-s\lambda/2}}{2(1-p^{-2s})},\\
&\text{Res}_{\beta = -p^{-s/2}} \, i_{s,p,2}(\beta) = -\frac{p^{-s\lambda/2}}{2(1-p^{-2s})}.
\end{align*}

Adding the two residues we infer that

\begin{equation}
\label{squarelift}
(h_{s,p,2})^{\flat}(t)  = \begin{cases} \frac{\lvert t_1 \rvert_p^{(s+1)/2}}{1-p^{-2s}}, & \mbox{if } \lambda \mbox{ even}, \\ 0, & \mbox{if } \lambda \mbox{ odd}.  \end{cases}
\end{equation}

\item $d=3$:

For the symmetric cube $L$-function, the poles of the integrand $i_{s,p,3}(\beta)$ are at $\beta = p^{-s}$ and at $\beta = e^{\frac{2\pi i l}{3}} p^{-s/3}$ for $l = 0,1,2$.
The residue at $\beta = p^{-s}$ is given by
\begin{equation*}
\text{Res}_{\beta = p^{-s}} \, i_{s,p,3}(\beta)  = \frac{-p^{-s(\lambda+2)}}{(1-p^{-2s})(1-p^{-4s})}.
\end{equation*}
For clarity's sake we won't write down the values of the residues at the remaining three poles, but simply note that their sum is equal to
\begin{equation*}
\frac{1}{(1-p^{-2s})(1-p^{-4s})}
\begin{cases} 
p^{-s\lambda/3}, & \mbox{if } \lambda \equiv 0 \bmod 3, \\ 
p^{-s(\lambda+8)/3}, & \mbox{if } \lambda \equiv 1 \bmod 3,\\
p^{-s(\lambda+4)/3}, & \mbox{if } \lambda \equiv 2 \bmod 3. 
\end{cases}
\end{equation*}
We remark that calculating these residues is a straightforward, albeit tedious, exercise.
Finally, we conclude that $(h_{s,p,3})^{\flat}(t)$ is equal to
\begin{equation}
\label{cubelift}
\frac{1}{(1-p^{-2s})(1-p^{-4s})}\begin{cases} \lvert t_1 \rvert_p^{s/3+1/2}-\lvert t_1 \rvert_p^{s-3/2}, & \mbox{if } \lambda \equiv 0 \bmod 3, \\
\lvert t_1 \rvert_p^{s/3-13/6}-\lvert t_1 \rvert_p^{s-3/2}, & \mbox{if } \lambda \equiv 1 \bmod 3,\\
\lvert t_1 \rvert_p^{s/3-5/6}-\lvert t_1 \rvert_p^{s-3/2}, & \mbox{if } \lambda \equiv 2 \bmod 3.  
\end{cases}
\end{equation}

\item $d=4$:

In this case, the integrand $i_{s,p,4}(\beta)$ has poles at $\beta = \pm p^{-s/2}$ and at $\beta = \pm p^{-s/4}, \pm i p^{-s/4}$. The sum of the residues at the first two poles is
\begin{equation*}
\frac{1}{(1-p^{-2s})(1-p^{-3s})}
\begin{cases} 
-p^{-s(\lambda+2)/2}, & \mbox{if } \lambda \equiv 0 \bmod 2, \\ 
0, & \mbox{if } \lambda \equiv 1 \bmod 2. 
\end{cases}
\end{equation*}
and the sum of the residues at the last four poles is
\begin{equation*}
\frac{1}{(1-p^{-2s})(1-p^{-3s})}
\begin{cases} 
p^{-s\lambda/4}, & \mbox{if } \lambda \equiv 0 \bmod 4, \\ 
p^{-s(\lambda+6)/4}, & \mbox{if } \lambda \equiv 2 \bmod 4,\\
0, & \mbox{otherwise.} 
\end{cases}
\end{equation*}

Adding up all the residues one obtains that $(h_{s,p,4})^{\flat}(t)$ is equal to
\begin{equation}
\label{fourthlift}
\frac{1}{(1-p^{-2s})(1-p^{-3s})}\begin{cases} \lvert t_1 \rvert_p^{s/4+1/2}-\lvert t_1 \rvert_p^{s/2-1/2}, & \mbox{if } \lambda \equiv 0 \bmod 4, \\
\lvert t_1 \rvert_p^{s/4-1}-\lvert t_1 \rvert_p^{s/2-1/2}, & \mbox{if } \lambda \equiv 2 \bmod 4,\\
0, & \mbox{otherwise.}  
\end{cases}
\end{equation}
\end{itemize}
%Note that in the last integral the integrand $i(\beta_1)$ has possible poles at inside the unit circle at $\beta = 0, \pm p^{-s/2}$. By the residue theorem,
%
%\begin{align*}
%(h_{s,p})^{\flat}(t) & = \frac{\lvert t_1 \rvert_p^{1/2}}{2(1-p^{-s})} \left( \text{Res}_{\beta_1 = 0} \, i(\beta_1) + \text{Res}_{\beta_1 = p^{-s/2}} \, i(\beta_1) + \text{Res}_{\beta_1 = -p^{-s/2}} \, i(\beta_1)\right),
%\end{align*}
%
%where the residues are equal to
%
%\begin{align*}
%\text{Res}_{\beta_1 = 0} \, i(\beta_1) & = \begin{cases} \frac{p^{s/2} \left( \left(p^{s/2} \right)^{\lambda+1} + \left(p^{s/2} \right)^{-\lambda-1}\right)}{\left(1+p^{-s}\right)}, & \mbox{if } \lambda \mbox{ even}, \\ 0, & \mbox{if } \lambda \mbox{ odd};  \end{cases}\\
%\text{Res}_{\beta_1 = p^{-s/2}} \, i(\beta_1) & = -\frac{p^{s/2} \left( \left(p^{s/2} \right)^{\lambda+1} - \left(p^{s/2} \right)^{-\lambda-1}\right)}{2\left(1+p^{-s}\right)}; \\
%\text{Res}_{\beta_1 = -p^{-s/2}} \, i(\beta_1) & = -\frac{p^{s/2} \left( \left(-p^{s/2} \right)^{\lambda+1} - \left(-p^{s/2} \right)^{-\lambda-1}\right)}{2\left(1+p^{-s}\right)}.
%\end{align*}
%
%Combining all the above terms we obtain
%
%\begin{equation*}
%(h_{s,p})^{\flat}(t)  = \begin{cases} \frac{\lvert t_1 \rvert_p^{(s+1)/2}}{1-p^{-2s}}, & \mbox{if } \lambda \mbox{ even}, \\ 0, & \mbox{if } \lambda \mbox{ odd}.  \end{cases}
%\end{equation*}
For large enough $\mathfrak{R}(s)$, since the functions $(h_{s,p,d})^{\flat}(t)$ satisfy the convergence condition in Remark~\ref{convergence}, it follows from Theorem~\ref{transformthm} that

\begin{equation*}
L_p(s,\text{Sym}^d \, \pi) = \int \limits_{\mathbb{Q}_p^{\times}} (h_{s,p,d})^{\flat}(t) W_{\alpha}(t)  d^{\times}t,
\end{equation*}
for $1 \leq d \leq 4$, where the functions $(h_{s,p,d})^{\flat}(t)$ are explicitly given by Equations~\ref{firstlift}-\ref{fourthlift}.

This approach should provide (conditionally on convergence issues) integral representations for $L_p(s, \text{Sym}^d \, \pi)$ for $d > 4$ as well. However, the residue calculations appear to become rather complicated. 

\section*{Acknowledgments}

The author would like to thank Dorian Goldfeld for helpful discussions, as well as for the comments on earlier drafts of this work. The author was partially supported by the FCT doctoral grant SFRH/BD/68772/2010.

\bibliography{biblio}

\def\Dbar{\leavevmode\lower.6ex\hbox to 0pt{\hskip-.23ex \accent"16\hss}D}
  \def\cftil#1{\ifmmode\setbox7\hbox{$\accent"5E#1$}\else
  \setbox7\hbox{\accent"5E#1}\penalty 10000\relax\fi\raise 1\ht7
  \hbox{\lower1.15ex\hbox to 1\wd7{\hss\accent"7E\hss}}\penalty 10000
  \hskip-1\wd7\penalty 10000\box7}
  \def\cfudot#1{\ifmmode\setbox7\hbox{$\accent"5E#1$}\else
  \setbox7\hbox{\accent"5E#1}\penalty 10000\relax\fi\raise 1\ht7
  \hbox{\raise.1ex\hbox to 1\wd7{\hss.\hss}}\penalty 10000 \hskip-1\wd7\penalty
  10000\box7}
\begin{thebibliography}{10}

\bibitem{bump}
Daniel Bump.
\newblock The {R}ankin-{S}elberg method: an introduction and survey.
\newblock In {\em Automorphic representations, {$L$}-functions and
  applications: progress and prospects}, volume~11 of {\em Ohio State Univ.
  Math. Res. Inst. Publ.}, pages 41--73. de Gruyter, Berlin, 2005.

\bibitem{casselmanshalika}
W.~Casselman and J.~Shalika.
\newblock The unramified principal series of {$p$}-adic groups. {II}. {T}he
  {W}hittaker function.
\newblock {\em Compositio Math.}, 41(2):207--231, 1980.

\bibitem{cogdell}
J.~W. Cogdell.
\newblock Notes on {$L$}-functions for {${\rm GL}_n$}.
\newblock In {\em School on {A}utomorphic {F}orms on {${\rm GL}(n)$}},
  volume~21 of {\em ICTP Lect. Notes}, pages 75--158. Abdus Salam Int. Cent.
  Theoret. Phys., Trieste, 2008.

\bibitem{delorme}
Patrick Delorme.
\newblock Formule de {P}lancherel pour les fonctions de {W}hittaker sur un
  groupe r\'eductif {$p$}-adique.
\newblock {\em Ann. Inst. Fourier (Grenoble)}, 63(1):155--217, 2013.

\bibitem{kazhdan}
I.~M. Gelfand and D.~A. Kajdan.
\newblock Representations of the group {${\rm GL}(n,K)$} where {$K$} is a local
  field.
\newblock In {\em Lie groups and their representations ({P}roc. {S}ummer
  {S}chool, {B}olyai {J}\'anos {M}ath. {S}oc., {B}udapest, 1971)}, pages
  95--118. Halsted, New York, 1975.

\bibitem{goldfeldbook}
Dorian Goldfeld.
\newblock {\em Automorphic forms and {$L$}-functions for the group {${\rm
  GL}(n,\mathbf R)$}}, volume~99 of {\em Cambridge Studies in Advanced
  Mathematics}.
\newblock Cambridge University Press, Cambridge, 2006.
\newblock With an appendix by Kevin A. Broughan.

\bibitem{transform}
Dorian Goldfeld and Alex Kontorovich.
\newblock On the determination of the {P}lancherel measure for
  {L}ebedev-{W}hittaker transforms on {${\rm GL}(n)$}.
\newblock {\em Acta Arith.}, 155(1):15--26, 2012.

\bibitem{jacquetetal}
H.~Jacquet, I.~I. Piatetski-Shapiro, and J.~Shalika.
\newblock Conducteur des repr\'esentations du groupe lin\'eaire.
\newblock {\em Math. Ann.}, 256(2):199--214, 1981.

\bibitem{jacquet}
Herv{\'e} Jacquet.
\newblock Fonctions de {W}hittaker associ\'ees aux groupes de {C}hevalley.
\newblock {\em Bull. Soc. Math. France}, 95:243--309, 1967.

\bibitem{macdonald}
I.~G. Macdonald.
\newblock {\em Symmetric functions and {H}all polynomials}.
\newblock The Clarendon Press, Oxford University Press, New York, 1979.
\newblock Oxford Mathematical Monographs.

\bibitem{miyauchi}
Michitaka Miyauchi.
\newblock Whittaker functions associated to newforms for {$GL(n)$} over
  {$p$}-adic fields.
\newblock {\em J. Math. Soc. Japan}, 66(1):17--24, 2014.

\bibitem{shintani}
Takuro Shintani.
\newblock On an explicit formula for class-{$1$} ``{W}hittaker functions'' on
  {$GL_{n}$} over {$P$}-adic fields.
\newblock {\em Proc. Japan Acad.}, 52(4):180--182, 1976.

\bibitem{stade}
Eric Stade.
\newblock Mellin transforms of {${\rm GL}(n,\mathbb R)$} {W}hittaker functions.
\newblock {\em Amer. J. Math.}, 123(1):121--161, 2001.

\bibitem{wallach}
Nolan~R. Wallach.
\newblock {\em Real reductive groups. {II}}, volume 132 of {\em Pure and
  Applied Mathematics}.
\newblock Academic Press, Inc., Boston, MA, 1992.

\end{thebibliography}
\bibliographystyle{plain}

\end{document}